\definecolor{dkgreen}{rgb}{0,0.6,0}
\definecolor{gray}{rgb}{0.5,0.5,0.5}
\definecolor{mauve}{rgb}{0.58,0,0.82}
\tiny\color{gray},
\theoremstyle{theorem}
\newtheorem{thm}{Theorem}
\newtheorem{pro}[thm]{Proposition}
\newtheorem{eg}[thm]{Example}
\newtheorem{qu}[thm]{Question}
\theoremstyle{definition}
\newtheorem{dfn}[thm]{Definition}
\newtheorem{rem}[thm]{Remark}
\newcommand{\HH} {\mathbb{H}}
\newcommand{\MM} {\mathbb{M}}
\newcommand{\NN} {\mathbb{N}}
\newcommand{\ZZ} {\mathbb{Z}}
\newcommand{\QQ} {\mathbb{Q}}
\newcommand{\RR} {\mathbb{R}}
\newcommand{\CC} {\mathbb{C}}
\newcommand{\FF} {\mathbb{F}}
\newcommand{\PP} {\mathbb{P}}
\newcommand{\VV} {\mathbb{V}}
\renewcommand{\AA} {\mathbb{A}}
\newcommand{\GG} {\mathbb{G}}
\newcommand{\kk} {\mathbf{k}}
\def\mydate{\ifcase\month \or January\or February\or March\or
April\or May\or June\or July\or August\or September\or October\or 
November\or December\fi \space\number\day,\space\number\year}
\begin{document}

\title{The Birational Geometry of Ceva's Theorem}

\author{Thomas Prince}
\email{tmp@wincoll.ac.uk}

\date{\today}

\begin{abstract}
In this article we study Ceva's theorem and its higher-dimensional extensions from the perspective of algebraic and projective geometry. First, we situate the theorem within the study of algebraic surfaces by relating it to the defining equation of a del Pezzo surface of degree six inside the product of three projective lines. Second, by interpreting (higher-dimensional analogues of) Ceva's theorem in terms of projections from projective spaces, we recast these results as matrix completion problems. We use these ideas to offer proofs of some higher-dimensional analogues of Ceva's theorem. This article is written with a nonspecialist audience in mind and we hope that some useful context is provided in the form of remarks in the sections on surfaces for students of algebraic geometry.
\end{abstract}
\maketitle

\section{Introduction.}
We begin by recalling Ceva's theorem; see for example \cite[p.~4]{CoxGre67}. To do so, we fix a triangle $\triangle{ABC}$ and points $D$, $E$, $F$, situated on exactly one of the sides $BC$, $AC$, and $AB$ respectively.

\begin{figure}[h]
    \centering
    \includegraphics{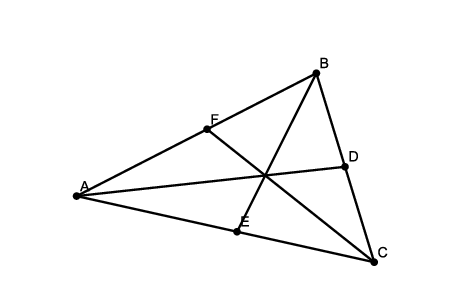}
    \caption{Ceva's theorem.}
\end{figure}

\begin{thm}[Ceva's theorem]
    The line segments $AD$, $BE$, and $CF$ coincide at a single point if and only if the following equality holds between ratios of side lengths on $\triangle{ABC}$:
    \[
    \frac{AE}{EC}\times \frac{CD}{DB} \times\frac{BF}{FA} = 1.
    \]
\end{thm}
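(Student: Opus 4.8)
The plan is to reduce both directions of the equivalence to a single elementary observation about areas: if a point lies on the line $BC$, then the two triangles it forms with a fixed apex have areas in the same ratio as the segments into which that point divides $BC$. Throughout I will write $[XYZ]$ for the (unsigned) area of a triangle $XYZ$.

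First I would establish the forward direction. Suppose the cevians $AD$, $BE$, and $CF$ meet at a common point $P$. Since the triangles $ABD$ and $ACD$ share the altitude from $A$ to the line $BC$, their areas satisfy $[ABD]/[ACD] = BD/DC$; applying the same altitude argument with apex $P$ gives $[PBD]/[PCD] = BD/DC$. Subtracting numerators and denominators of these equal ratios (valid because $P$ lies on the segment $AD$, so that $[ABP] = [ABD] - [PBD]$ and likewise $[APC] = [ACD] - [PCD]$) yields
\[
\frac{BD}{DC} = \frac{[ABP]}{[APC]}.
\]
Repeating the argument at the vertices $E$ and $F$ gives $CE/EA = [BCP]/[BAP]$ and $AF/FB = [CAP]/[CBP]$. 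Multiplying these three identities, the six areas cancel in pairs and the product collapses to $1$. Since each of the three ratios $BD/DC$, $CE/EA$, $AF/FB$ is the reciprocal of the corresponding factor $CD/DB$, $AE/EC$, $BF/FA$ appearing in the statement, the displayed equality follows at once.

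For the converse I would use the standard uniqueness argument. Assuming the ratio condition holds, let $P$ be the intersection of the cevians $AD$ and $BE$ (which meet inside the triangle because $D$ and $E$ are interior to their respective sides), and let the line $CP$ meet $AB$ at a point $F'$. The forward direction, now applied to the genuinely concurrent cevians $AD$, $BE$, and $CF'$, shows that the product obtained with $F'$ in place of $F$ also equals $1$. Comparing with the hypothesis forces $BF'/F'A = BF/FA$, and since a point dividing a segment in a prescribed ratio is unique we conclude $F' = F$. Hence $CF$ passes through $P$, and the three cevians are concurrent.

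The main obstacle I anticipate is not the algebra, which telescopes cleanly, but the bookkeeping in the area-subtraction step: one must verify that $P$ genuinely lies on the relevant segments so that the areas subtract with the correct signs, and that in the converse the cevians $AD$ and $BE$ do intersect. Because the statement places $D$, $E$, $F$ on the sides themselves, these configuration issues are benign; a fully rigorous treatment of the general version (admitting points on the extensions of the sides) would instead replace the unsigned areas by \emph{signed} areas and track orientations consistently throughout.
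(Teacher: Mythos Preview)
Your argument is correct: this is the classical area-ratio proof of Ceva's theorem, and the telescoping and uniqueness steps are handled cleanly. It differs genuinely from the paper's proof, however. The paper embeds $\triangle ABC$ in $\RR^3$ as the triangle with vertices $(1,0,0)$, $(0,1,0)$, $(0,0,1)$, writes $D$, $E$, $F$ in coordinates, and observes that the three cevians are coincident precisely when the three planes through the origin that they span have a common line---that is, when a certain $3\times 3$ matrix is singular. Expanding that determinant yields the Ceva identity directly, with both directions handled at once. Your approach has the virtue of being entirely elementary and synthetic, relying only on the shared-altitude area lemma; its cost is that it does not obviously generalize and requires separate treatment of the two implications plus some care about configurations (as you note). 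The paper's determinant proof, by contrast, is chosen precisely because it sets up the projective-coordinate and linear-algebra framework exploited in the rest of the article: the del~Pezzo surface description, the higher-dimensional extensions, and the reformulation as a matrix-completion problem all grow out of viewing Ceva's condition as the vanishing of a determinant.
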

If any of $D$, $E$, or $F$ is instead taken to lie on the line produced by $BC$, $AC$, or $AB$ respectively, the result still holds, although a signed length must be used.

\section{Ceva's Theorem from linear algebra.}
\label{sec:ceva1}
While Ceva's theorem does, of course, admit proofs within the framework of Euclidean geometry, we explain how to formulate the problem using coordinate geometry and linear algebra. This will allow us to fix many of the ideas used throughout the remainder of the article. As we observe below, this also provides a short proof of Ceva's theorem using determinants. The interpretation of Ceva's theorem as a vanishing determinant appears in work by Wernicke~\cite[p.~3]{Wern27}; our treatment also overlaps with that of Ben\'{i}tez~\cite{Benitez07}.

Crucially for us, the lengths that appear in Ceva's theorem appear only as ratios between collinear points. These ratios are invariant under the application of any injective linear map. Employing a standard technique from affine geometry, we view the plane containing $\triangle{ABC}$ as the ``height one slice" ($z=1$) of $\RR^3$, three-dimensional space. Fixing coordinates $(a_1,a_2)$, $(b_1,b_2)$, and $(c_1,c_2)$ for $A$, $B$, and $C$ respectively, we consider the transformation determined by the matrix
\[
T = \begin{pmatrix}
    a_1 & b_1 & c_1 \\
    a_2 & b_2 & c_2 \\
    1 & 1 & 1
\end{pmatrix}.
\]
The inverse of this transformation sends points with coordinates $(a_1,a_2,1)$, $(b_1,b_2,1)$, and $(c_1,c_2,1)$ to $(1,0,0)$, $(0,1,0)$, and $(0,0,1)$ respectively. The resulting triangle in three-dimensional space is illustrated in Figure~\ref{fig:cevain3d}. The points $D$, $E$, and $F$ map to points in the coordinate planes, and we let $(0,d_0,d_1)$, $(e_1,0,e_0)$, and $(f_0,f_1,0)$ denote the respective images of these three points. The point $(0,d_0,d_1)$ divides the segment between $(0,1,0)$ and $(0,0,1)$ in the ratio $d_1:d_0$ and hence $\frac{CD}{DB}$ becomes $d_0/d_1$. Similar expressions hold for the other two ratios: these become $e_0/e_1$ and $f_0/f_1$.

\begin{figure}[b]
    \centering
    \includegraphics{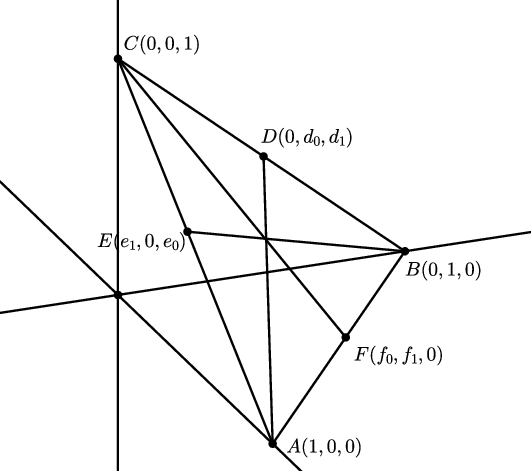}
    \caption{Embedding the triangle in $\RR^3$.}
    \label{fig:cevain3d}
\end{figure}

The question of whether $AD$, $BE$, and $CF$ are coincident can now be phrased as a question concerning the common intersection of three planes through the origin, spanned by $(1,0,0)$ and $(0,d_0,d_1)$, $(0,1,0)$ and $(e_1,0,e_0)$, or $(0,0,1)$ and $(f_0,f_1,0)$ respectively. This dictionary between lines in the plane and planes through the origin in $\RR^3$ will play a central role in what follows. To conclude this section however, we observe that Ceva's theorem follows by considering the vanishing of the determinant of a certain $3\times 3$ matrix.

\begin{proof}[Proof of Ceva's theorem]

The three planes considered above are those respectively annihilated by $\begin{pmatrix}0 & d_1 & -d_0 \end{pmatrix}$, $\begin{pmatrix}-e_0& 0& e_1\end{pmatrix}$, and $\begin{pmatrix}-f_1 & f_0 & 0\end{pmatrix}$, viewed as maps from $\RR^3$ to $\RR$. There is a line in $\RR^3$ simultaneously annihilated by all three of these maps if and only if the matrix
\[
\begin{pmatrix}
   0 & d_1 & -d_0 \\
   -e_0 & 0 & e_1 \\
   -f_1 & f_0 & 0 \\
\end{pmatrix}
\]
has a nontrivial kernel. This occurs exactly when the determinant of the above matrix vanishes, that is, when
\[
e_0(f_0d_0) - f_1(d_1e_1)=0.
\]
Rearranging this equation, we find that
\begin{equation}
\label{eq:Cevacoords}
\frac{d_0}{d_1} \times \frac{e_0}{e_1} \times \frac{f_0}{f_1}= 1.    
\end{equation}
This is nothing but the statement of Ceva's theorem in these coordinates.
\end{proof}

\section{Projective spaces.}

In this section and the next we recall some basic constructions of algebraic geometry. While we generally work over the real numbers, the natural setting for Ceva's theorem, we indicate where various generalizations are possible.

Given a vector space $V$ over a field $\kk$, we write $\PP(V)$ for the \emph{projective space} determined by $V$: the set of lines through the origin of $V$. In other words, $\PP(V)$ is the set of equivalence classes (excluding the singleton set containing the origin) of the relation $\sim$ for which $v \sim w$ if and only if the vectors $v$ and $w$ are proportional. Where the ground field $\kk$ is understood, we write $\PP^n$ for the projective space $\PP(\kk^{n+1})$. All vector spaces considered in this article are finite-dimensional.

The most important such space for us will be \emph{real projective plane}. As every line in $\RR^3$ through the origin can be written in the form
\[
\left\{ \lambda \begin{pmatrix}a\\b\\c\end{pmatrix} : \lambda \in \RR \right\},
\]
where $a$, $b$, and $c$ are not all equal to zero, we can specify any point in this projective space via its \emph{projective coordinates} $(a:b:c)$. By definition, we identify $(a:b:c)$ with $(\lambda a:\lambda b:\lambda c)$ for any $\lambda \neq 0$.

\begin{rem}
Despite describing the projective plane above as a ``set," it can --- and indeed should --- be considered as a set endowed with one of a range of possible additional structures. For example, since each line passing through the origin in $\RR^3$ meets the sphere of unit radius in precisely two antipodal points, each point in the projective plane corresponds to a pair of opposite points on the sphere. This description allows us to describe the real projective plane as a smooth manifold whose universal cover is a sphere.

There are also algebro-geometric structures one can consider, and indeed there is nothing to restrict our treatment to the field of real numbers. While the above topological description will cease to apply, we can instead describe projective spaces as \emph{varieties} or \emph{schemes}. We refer to texts by Hartshorne~\cite{Ha77} or Shafarevich~\cite{Sh94}, and to the introductory text of Smith \emph{et al.}~\cite{SmKaLaKe00}, for further detail on these structures.
\end{rem}

Reviewing the proof of Ceva's theorem given in Section~\ref{sec:ceva1}, we note that viewing the triangle $\triangle{ABC}$ as a subset of the plane $z=1$ in $\RR^3$ allows us to view it as a triangle in $\PP^2$. The vertices $A$, $B$, and $C$ are now lines containing the origin and the corresponding vertices (with $z$-coordinate equal to one). The transformation $T$ of the previous section identifies $A$, $B$, and $C$ with the points $(1,0,0)$, $(0,1,0)$, and $(0,0,1)$ respectively. It follows that this transformation identifies $A$, $B$, and $C$ with the points $(1:0:0)$, $(0:1:0)$, and $(0:0:1)$ in a projective plane with coordinates $(x_0:x_1:x_2)$.

Lines in the plane become \emph{projective lines}: the set of lines through the origin that are contained in a given plane in $\RR^3$. The edge $AB$, for example, is identified with the segment of the projective line $(a:b:0)$ where $a,b \geq 0$. We refer to the lines $x_0=0$, $x_1=0$, and $x_2=0$ as coordinate lines in $\PP^2$. Note that there are three of these, corresponding to the sets of lines contained in each of the coordinate planes of $\RR^3$. These three lines enclose the triangle with vertices $(1:0:0)$, $(0:1:0)$, and $(0:0:1)$.

In all that follows we rely on two key observations:
\begin{enumerate}
    \item Edges of a given triangle $\triangle{ABC}$ can be identified with (segments of) the three coordinate lines in $\PP^2$.
    \item In this language, the point $F$, for example, has coordinates $(f_0:f_1:0)$ in the \emph{projective coordinate line} $x_2=0$. The ratio $\frac{BF}{FA}$ is equal to $f_0/f_1$ and is well-defined as long as $f_1\neq 0$; that is, unless $F$ coincides with $A$.
\end{enumerate}

\section{Blowing up.}

The operation of \emph{blowing up} a surface, such as $\PP^2$, is a fundamental one, both to algebraic geometry in general and to our treatment of Ceva's theorem in particular. We begin by describing the blowup of a point in $\RR^2$.

Recalling that the points of $\PP^1$ are lines through the origin of $\RR^2$, we consider the set $S$ of pairs $(P,l)$ in $\RR^2 \times \PP^1$ that consists of points $P = (x,y) \in \RR^2$ and a line $l \in \PP^1$ through $O = (0,0)$ and $P$.

If $P$ is not equal to $O$, there is a unique line through $O$ and $P$ and so the only possible pair $(P,l)$ is given by $((x,y) , (x:y))$. If however $P = O$, then $l$ may be equal to any line through the origin. In fact, a pair $((x,y),(a_0:a_1))$ is contained in $S$ precisely when $xa_1=a_0y$. Note that, as expected, if $x$ and $y$ are not both zero, $a_0$, and $a_1$ are determined up to the usual overall scale factor. If however $x=y=0$, then any values of $a_0$ and $a_1$ are permitted.

\begin{dfn}
\label{dfn:blowup}
We refer to the solution set of the equation 
\begin{equation}
\label{eq:blowup}
xa_1 = a_0y
\end{equation}
in
$\RR^2 \times \PP^1$ as the \emph{blowup} of $\RR^2$ at the origin.
\end{dfn}

To give a topological description of this operation (only applicable over $\RR$): we have removed a small disc around $O$ from the plane and glued the boundary edge of a M\"obius band to the edge that removing this disc has created. The central circle of this M\"obius band is the projective line $\PP^1$ with coordinates $((0,0),(a_0:a_1))$. This projective line, inserted by blowing up, is referred to as the \emph{exceptional locus} of the blowup. To illustrate this blowup, we consider the locus $a_0\neq 0$, noting that we can write the coordinates of any point in this locus in the form $((x,y),(1:a_1))$. The defining equation of the blowup becomes $y=a_1x$, and this surface is illustrated in Figure~\ref{fig:blowup}. The vertical ($a_1$) axis is the exceptional locus while, for $x\neq 0$, the surface is the graph of the function $y/x$, the slope of the line between the origin and $(x,y)$. Figure~\ref{fig:blowup} also illustrates the way in which lines in the plane that pass through the origin are ``separated" by the blowup into disjoint lines that intersect the exceptional locus.

\begin{figure}
\centering
\includegraphics{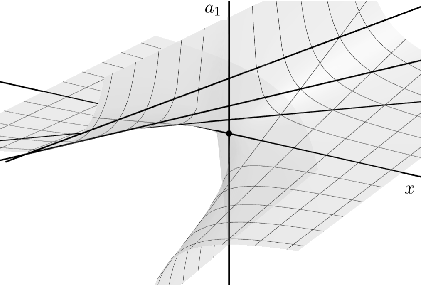} 
\caption{Blowing up the plane.}
\label{fig:blowup}
\end{figure}

Our description of the blowup as an incidence locus extends to define the blowup of the projective plane in a point. Indeed, fixing a point $O \in \PP^2$, we consider the set of pairs $(P,l)$ in which $l$ is a projective line passing through $O$ and $P$. Similarly to the affine case, and taking $O = (0:0:1)$, this locus can be described algebraically as
\[
\{((x_0:x_1:x_2),(y_0:y_1)) : x_0y_1=x_1y_0\}.
\]
Observe that if $x_2 \neq 0$ we can set $x_2=1$ and recover our previous description of a blowup.

\section{del Pezzo surfaces of degree six.}

In the Enriques--Kodaira classification of (complex) algebraic surfaces, an important collection of surfaces, called the \emph{del Pezzo} surfaces, is obtained by repeatedly blowing up the projective plane. Our description of Ceva's theorem relies particularly on one of these---the del Pezzo surface of degree six---obtained by blowing up the plane in three noncollinear points. Over the complex numbers, there are ten families of del Pezzo surfaces in total, including the projective plane, the product of two projective lines, and any cubic surface in $\PP^3$. Over the real numbers the situation is more subtle; see, for example, work of Koll\'ar~\cite{Koll01}. Once the projective plane is blown up in nine general points its geometry changes significantly, and the resulting surface becomes \emph{rational elliptic}.

Consider, as above, a triangle $\triangle{ABC}$ together with points $D$ on $BC$, $E$ on $AC$, and $F$ on $AB$. As in Section~\ref{sec:ceva1}, we may identify this triangle with the triangle in $\PP^2$ with vertices $(1 : 0 : 0)$, $(0: 1 : 0)$, and $(0: 0: 1)$. The points $D$, $E$, and $F$ have coordinates of the form $(0:d_0:d_1)$, $(e_1:0:e_0)$, and $(f_0:f_1:0)$ respectively. We also identify the projective line with coordinates $(d_0:d_1)$, for example, with lines in $\PP^2$ passing though $(1:0:0)$ via the identification of $(0:d_0:d_1)$ with the line containing $(1:0:0)$ and $(0:d_0:d_1)$.

Having fixed this notation, we construct the blowup of $\PP^2$ in the three points $A$, $B$, and $C$ as an incidence locus. This is given by the set of tuples $(P,l_A,l_B,l_C)$ consisting of a point $P$ in $\PP^2$, and lines $l_Q$ passing through $P$ and $Q$, where $Q$ is a point equal to one of $A$, $B$, or $C$. Recalling that these three points are identified with $(1:0:0)$, $(0:1:0)$, and $(0:0:1)$, the surface $S$ obtained via this blowup can be described algebraically as a subset of $\PP^2 \times \PP^1\times \PP^1\times \PP^1$ using the following equations:
\begin{align}
\label{eq:dP6eqns}
   x_1d_1 &= x_2d_0 \\ \nonumber
   x_2e_1 &= x_0e_0 \\ \nonumber
   x_0f_1 &= x_1f_0.  \nonumber
\end{align}

\begin{rem}
Our description of the blowup of a plane may seem a little ad hoc. The above description of $S$ can be reconciled with a more general theory of blowups in various ways. Perhaps the simplest makes use of the fact that blowing up does not affect (more precisely, the blowup map is an isomorphism on) any open set that does not contain the locus being blown up. Since we are blowing up the plane in a finite set of distinct points we can simply iterate the construction given by Definition~\ref{dfn:blowup} to obtain \eqref{eq:dP6eqns}.
\end{rem}

\section{Ceva's theorem revisited.}
\label{sec:ceva-again}
Ceva's theorem concerns the following question: \emph{given lines $l_A$, $l_B$, and $l_C$ through $A$, $B$, and $C$ respectively, when are these lines coincident at some point $P$?} In the previous section we constructed the collection $S$ of all such coincident lines, together with the point $P$ at which they coincide. This surface is the subset
\[
S \subset \PP^2 \times \PP^1 \times \PP^1 \times \PP^1
\]
defined by \eqref{eq:dP6eqns}. For brevity, we denote the product of $n$ copies of a projective space $\PP^k$ by $(\PP^k)^n$ in what follows. Consider now the image of $S$ after applying the projection
\[
\pi \colon \PP^2 \times (\PP^1)^3 \to (\PP^1)^3.
\]
This projection ``forgets" the point $P$ and its image is precisely the set of triples of lines $(l_A,l_B,l_C)$ that coincide at a point.

\begin{pro}
\label{pro:ceva2}
The image of $S$ in $(\PP^1)^3$ has equation $d_0e_0f_0=d_1e_1f_1$.
\end{pro}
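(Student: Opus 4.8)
The plan is to recognize the three defining equations \eqref{eq:dP6eqns} as a single homogeneous linear system in the coordinates of $P = (x_0:x_1:x_2)$, and thereby reduce the question of which triples $((d_0:d_1),(e_0:e_1),(f_0:f_1))$ lie in the image of $\pi$ to a rank condition on a $3\times 3$ matrix. Collecting the equations in the forms $d_1 x_1 - d_0 x_2 = 0$, $-e_0 x_0 + e_1 x_2 = 0$, and $f_1 x_0 - f_0 x_1 = 0$, I would write them as $M\,(x_0,x_1,x_2)^{\mathsf T} = 0$, where
\[
M = \begin{pmatrix} 0 & d_1 & -d_0 \\ -e_0 & 0 & e_1 \\ f_1 & -f_0 & 0 \end{pmatrix}.
\]
The entries of $M$ depend only on the $(\PP^1)^3$ coordinates, so the fiber of $\pi$ over a fixed triple is exactly the projective linear subspace $\PP(\ker M) \subseteq \PP^2$.

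With this reformulation the proposition splits into two inclusions. For the forward inclusion, any point of $S$ lying over a given triple provides a nonzero vector $P \in \ker M$, which forces $\det M = 0$; hence the image is contained in the zero locus of $\det M$. For the reverse inclusion I would argue directly that every point of this zero locus is attained: if $\det M = 0$ then $M$ has a nontrivial kernel, and any nonzero kernel vector $P$ yields a point $(P,(d_0:d_1),(e_0:e_1),(f_0:f_1)) \in S$ mapping to the chosen triple. This lifting argument has the advantage of being uniform, so it disposes of the degenerate loci (where a coordinate such as $d_0$ vanishes, or where two of the lines coincide and $\ker M$ jumps to dimension two) without separate casework; the image is therefore exactly the zero locus of the single polynomial $\det M$.

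It then remains to compute the determinant. Expanding $M$ along its first row yields $\det M = d_1 e_1 f_1 - d_0 e_0 f_0$; up to sign this is the same determinant that appeared in the proof of Ceva's theorem in Section~\ref{sec:ceva1}. Setting it to zero gives precisely $d_0 e_0 f_0 = d_1 e_1 f_1$, which is the claimed equation of the image.

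I expect the only genuine subtlety to be the reverse inclusion, namely confirming that $\pi$ surjects onto the hypersurface rather than merely mapping into it. The determinant computation guarantees a nontrivial kernel at every point of the hypersurface, so a lift always exists; one could alternatively invoke the fact that projection from the complete factor $\PP^2$ is a closed map, so that the image is Zariski closed and, being an irreducible hypersurface that contains the image, must coincide with it. I would present the elementary lifting argument as the main line, since it avoids appealing to properness and keeps the proof self-contained for the nonspecialist reader.
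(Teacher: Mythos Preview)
Your argument is correct and self-contained, but it is a genuinely different route from the paper's. The paper proves more: it shows directly that $\pi\colon S\to H$ is an \emph{isomorphism}. After noting that neither $((1{:}0),(1{:}0),(1{:}0))$ nor $((0{:}1),(0{:}1),(0{:}1))$ lies in the image, it uses the symmetry of \eqref{eq:dP6eqns} to reduce to a chart where $d_1,e_0,f_0$ are nonzero, deduces $x_2\neq 0$, and then solves explicitly for $x_0,x_1$ in terms of the $(\PP^1)^3$-coordinates, with the hypersurface equation emerging as the residual compatibility. Your approach instead packages \eqref{eq:dP6eqns} as the single linear system $M(x_0,x_1,x_2)^{\mathsf T}=0$ and identifies the image with $\{\det M=0\}$; this is essentially the determinant computation of Section~\ref{sec:ceva1}, now read as describing $\pi(S)$ rather than a concurrency condition. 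What you gain is uniformity and brevity, and your lifting argument for surjectivity is clean; what the paper's argument buys is the stronger conclusion that $\pi$ is invertible. One small correction to your parenthetical: the kernel of $M$ can never jump to dimension two, since the shape of $M$ forces $\operatorname{rank} M\geq 2$ at every point of $(\PP^1)^3$; so your lift is in fact always unique, in agreement with the paper's isomorphism statement.
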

\begin{proof}
We will in fact show directly that $\pi$ is an isomorphism (an invertible algebraic map) of varieties from $S$ to the hypersurface $H$ defined by $d_0e_0f_0=d_1e_1f_1$. We first note that, studying the equations \eqref{eq:dP6eqns}, neither the point $((1:0),(1:0),(1:0))$, nor $((0:1),(0:1),(0:1))$, is an element of $S$. We therefore have that, for any point of $S$, either two of $d_0, e_0, f_0$ are nonzero or two of $d_1, e_1, f_1$ are. Exploiting the symmetries of the equations \eqref{eq:dP6eqns}, we may assume without loss of generality that $d_1$, $e_0$, and $f_0$ are all nonzero.

Observe that, as $d_1$, $e_0$, and $f_0$ are nonzero, if $x_2=0$ we must have that $x_0$ and $x_1$ both vanish, which is not permitted. Hence $x_2 \neq 0$ and we can set $x_2=1$. This reduces the equations in \eqref{eq:dP6eqns} to
\begin{align*}
   x_1d_1 &= d_0 \\
   e_1 &= x_0e_0 \\
   x_0f_1 &= x_1f_0, \\
\end{align*}
from which $x_0=e_1/e_0$, $x_1 = d_0/d_1$. Noting that $x_0f_1 = x_1f_0$ we conclude that $d_0e_0f_0=d_1e_1f_1$, as required. 
\end{proof}

\begin{rem}
If we restrict to points for which $x_0x_1x_2$ is nonzero, after multiplying the equations in \eqref{eq:dP6eqns} together and dividing by $x_0x_1x_2$, we find that the image of $S$ contains a (Zariski) open set of $H$. It is possible at this point to argue from general results in algebraic geometry that the image of $S$ is the closure of this open set. As $H$ is irreducible, this closure must be $H$ itself. The argument is that, since $S$ is projective and $H$ is separated, the map from $S$ to $H$ is proper \cite[Proposition~$12.58(3)$]{GoWe20}, and hence its image is closed. This is analogous to the result in general topology that the image of map from a compact space to a Hausdorff one is closed.
\end{rem}

Proposition~\ref{pro:ceva2} thus provides an equation that determines when three lines, originating from fixed points, are coincident; that is, when 
\[
\frac{d_0}{d_1} \times \frac{e_0}{e_1} \times \frac{f_0}{f_1}=1,
\]
which is Ceva's theorem.

\begin{rem}
    Each of the monomials $d_0e_0f_0$ and $d_1e_1f_1$ can be viewed as sections of a \emph{line bundle} over $(\PP^1)^3$ of tridegree $(1,1,1)$. $S$, the blowup of $\PP^2$ in three (non-collinear) points is thereby demonstrated to be the vanishing locus of a global section of this line bundle. This is a standard description of the del Pezzo surface of degree six.
\end{rem}

\section{A birational view of Ceva's theorem.}
\label{sec:birational-ceva}
So far our constructions have been \emph{projective}: we have obtained an isomorphism of projective varieties from a del Pezzo surface $X$ of degree six to a hypersurface in $(\PP^1)^3$. In this and subsequent sections, we describe the restriction of this hypersurface to certain open sets, allowing us to make use of (bi)rational maps: algebraically defined maps that may not be defined on the entirety of the algebraic variety in question.

In what follows, a key role will be played by \emph{projections} from projective spaces. Given a point $P$ in $\PP^2$ with coordinates $(x_0:x_1:x_2)$, we can project from, for example, the point $(1:0:0)$, to obtain the point $(x_1:x_2)$ in $\PP^1$. Geometrically, this is the point in $\PP^1$ obtained by taking the point of intersection between $x_0=0$ and the line passing through both $(1:0:0)$ and $P$. Note that this projection is only well-defined if $P$ is in the complement of $(1:0:0)$. Alternatively, viewing $\PP^1$ as the space of lines through $(1:0:0)$, the point $(x_1:x_2)$ can be interpreted simply as the line passing through $P$ and $(1:0:0)$ itself.

If we restrict to the open set in $\PP^2$ consisting of points $(x_0:x_1:x_2)$ such that no two of $x_0$, $x_1$, and $x_2$ are simultaneously zero, we can define the map
\[
\PP^2 \dashrightarrow (\PP^1)^3
\]
sending $(x_0:x_1:x_2)$ to $((x_1:x_2),(x_2:x_0),(x_0:x_1))$. This replaces a point in $\PP^2$ with each of its three projections to coordinate lines. The dashed arrow indicated that this is a rational map, that is, defined on an open set in $\PP^2$. On this open set, this map coincides with the composition
\[
\PP^2 \dashrightarrow \PP^2 \times(\PP^1)^3 \to (\PP^1)^3.
\]
Here the left-hand (rational) map is given by sending $P$ to $(P,l_A,l_B,l_C)$, where $l_Q$ is the unique line passing through $P$ and $Q$ (as $P$ is distinct from $A$, $B$, and $C$); the right-hand map is given by the projection onto $(\PP^1)^3$.

Using the above, we can rephrase the question answered by Ceva's theorem as follows.

\begin{qu}
\label{qu:ceva-recast}
    Given a triple $((d_0:d_1),(e_0:e_1),(f_0:f_1)) \in (\PP^1)^3$, when is this triple given by the three projections of a point $(x_0:x_1:x_2)$ to coordinate lines in $\PP^2$?
\end{qu}

Reformulating this question yet again, we can consider the matrix
\begin{align}
\label{eq:matrix-completion}
\begin{pmatrix}
    \star & d_0 & d_1 \\ 
    e_1 & \star & e_0 \\ 
    f_0 & f_1 & \star
\end{pmatrix}.
\end{align}

Question~\ref{qu:ceva-recast} is answered affirmatively precisely when we can complete this matrix to one for which any pair of rows differ only by an overall scale; in other words, when we can complete this to a matrix of rank one. This is an example of a (low rank) \emph{matrix completion problem}, generalizations of which have important applications in statistics, machine learning, and algebraic matroid theory, among other fields. We refer the interested reader to the survey works \cite{Jo90, LiHuChZh19} and to \cite{ChiLi19,FanCh17,RoSiTh20} for a small sample of the literature on matrix completion relevant to the fields listed above.

\section{Higher dimensions: preliminaries.}

In this section and the next we replace the triangle $\triangle{ABC}$ with an $n$-dimensional simplex $\Delta$ which we fix for the remainder of this article. As in the two-dimensional case, we can identify the vertices of $\Delta$ with the points $(1:0:\cdots:0), \ldots, (0:\cdots:0:1)$ in $\PP^n$. All the faces of $\Delta$ are contained in coordinate subspaces of this projective space, that is, each face is determined by the vanishing of a subset of the variables $x_0,\ldots, x_n$ on $\PP^n$. To describe such coordinate subspaces, we make use of the following notation. 

\begin{dfn}
    Given a subset $I = \{i_0,\ldots,i_k\}$ of $\{0,\ldots,n\}$ we let both $\langle i_0,\ldots, i_k \rangle$ and $\langle I \rangle$ denote the (projective) linear space in $\PP^n$ formed by the vanishing of all coordinate functions $x_j$ for $j \notin I$.
\end{dfn}

The projections from a point in $\PP^2$ described in the previous section generalize naturally to (rational) maps from any projective space. To illustrate this, fix a positive integer $n$ and a subset $I \subset \{0,\ldots,n\}$ of size $k+1$, where $k<n$. Writing $I = \{i_0,\ldots, i_k\}$, there is a projection
\[
\pi_I \colon \PP^n \dashrightarrow \PP^k
\]
which maps $(x_0:\cdots : x_n) \mapsto (x_{i_0}:\cdots: x_{i_k})$. In the notation introduced above, we can identify the image of this projection with $\langle I \rangle$. To describe this projection geometrically we introduce two pieces of terminology.

\begin{dfn}
Fixing a $k$-dimensional face $F$ of $\Delta$, there is a face $F'$ of $\Delta$ \emph{opposite} $F$: the smallest face of $\Delta$ containing those vertices not contained in $F$.    
\end{dfn}

Next, given its fundamental role in what follows, we explicitly formulate what we mean by the (linear) span of a face and a point of $\Delta$, viewed as subsets of $\PP^n$.

\begin{dfn}
    Given a collection of points $S$ in $\PP^n$ the \emph{span} of $S$ is the smallest linear subspace of $\PP^n$ that contains $S$. Given a face $F$ of $\Delta$ and a point $P \notin F$ the span of $F$ and $P$ is the span of the union of $\{P\}$ with the set of vertices of $F$.
\end{dfn}

The role of the \emph{cevians}, the lines containing a vertex of a triangle and a point on the opposite edge, is played by the span of a point in a face $F$ of $\Delta$ and the face opposite $F$. Moreover, the projection of a point $P$ in $\Delta$ from $F$ is the unique point in the face $F'$ opposite $F$ given by the intersection of $F'$ with the span of $P$ and $F$.

\begin{eg}
    Consider the map sending $(x_0:x_1:x_2:x_3)$ to $(x_2:x_3)$. This projection is defined in the complement of the line $x_2=x_3=0$. Geometrically one finds the unique plane containing $(x_0:x_1:x_2:x_3)$ and the line $x_2=x_3=0$. This plane intersects the line $x_0=x_1=0$ in the single point $(0:0:x_2:x_3)$ which we can view as the image of this projection. See Figure~\ref{fig:plane-in-p3} for an illustration of this projection.
    \begin{figure}
        \centering
        \includegraphics{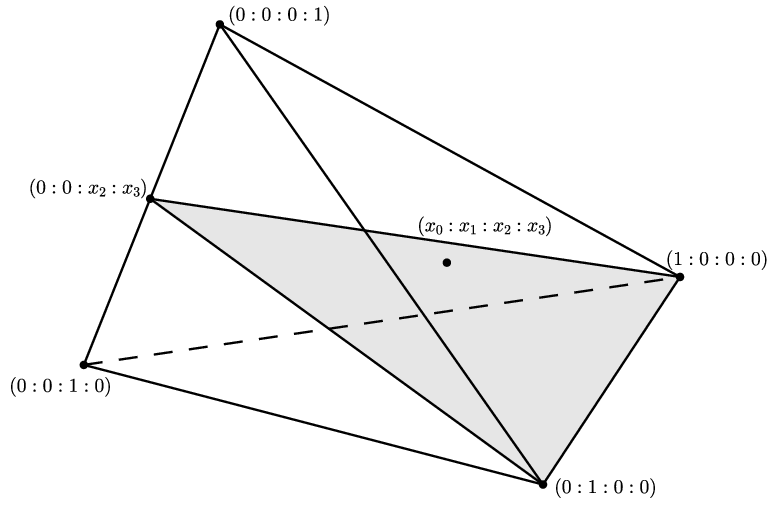}
        \caption{Projecting a point from a line in $\PP^3$.}
        \label{fig:plane-in-p3}
    \end{figure}
\end{eg}

\section{Extensions of Ceva's theorem.}
For fixed $n$ and $k$, the natural generalization of the question answered by Ceva's theorem---as formulated in Question~\ref{qu:ceva-recast}---becomes the following.

\begin{qu}
    \label{qu:ceva-generalization}
    Fix a point $P$ in $\langle I \rangle$ in each $(k+1)$-dimensional subset $I$ of $\{0,\ldots, n\}$. When is this collection of points given by the projection of a single point $P$ in $\PP^n$ to each $k$-dimensional coordinate space?
\end{qu}

Note that any $n$-dimensional simplex $\Delta$ can be identified with the intersection of the positive orthant in $\RR^{n+1}$ and the hyperplane in which the coordinates sum to one. Using this identification, a point $P$ in $\Delta$ can be given, using projective coordinates, as $(x_0:\cdots:x_n)$. Given a subset $I \subseteq \{0,\ldots,n\}$ of size $k+1$, let $F$ (unrelated to the point $F$ which appeared previously) denote the face of $\Delta$ contained in the linear space $\langle I\rangle$. We observe that the intersection of $F$ with the span of $P$ and the face opposite $F$ has coordinates $x_i$ for $i \in I$ and $0$ otherwise. Thus we can reinterpret Question~\ref{qu:ceva-generalization} in the following way.

\begin{qu}
    \label{qu:ceva-generalization-geo}
    Fix a point in every $k$-dimensional face $F$ of $\Delta$ and not contained in any $(k-1)$-dimensional face. For each $F$ we consider the span of the given point in $F$ and the face opposite $F$. When do these linear subspaces intersect in a point $P \in \Delta$?
\end{qu}

Ceva's theorem itself answers this question in the case $n=2$, $k=1$ by providing a generator of the ideal of functions vanishing on the algebraic set given by the image of the rational map $\PP^2 \dashrightarrow (\PP^1)^3$ in which each component is one of the maps $\pi_I$. In general, a choice of point in every $k$-dimensional face of $\Delta$ (or in the span of this face) can be described as a point in $\prod_I{\PP^k}$, where $I$ runs over all subsets of $\{0,\ldots, n\}$ of size $k$. To rephrase questions~\ref{qu:ceva-generalization} and \ref{qu:ceva-generalization-geo} in these terms, we restrict to the subset $U$ of $\prod_I\PP^k$ in which all coordinates are nonzero. This has the advantage that it describes an affine algebraic variety (rather than a quasi-projective one) and coincides with the usual setting of Ceva's theorem (as cevians are not generally permitted to intersect on an edge of the triangle). Let $V$ denote the intersection of the image of $\pi_I$ with $U$.

\begin{qu}
    \label{qu:ceva-alggeom}
    Describe an ideal $I$ whose vanishing locus $V(I)$ coincides with $V$ on the open set $U \subset \prod_I\PP^k$.
\end{qu}

\begin{rem}
    The set $U$ is the product of open sets of projective spaces $\PP^k$ in which every coordinate is nonzero. This set is acted on by the product ${\big(\kk^\star\big)}^k$ of $k$ copies of the group of (multiplicative) units in $\kk$. As such this is set often described as a \emph{torus} (although admittedly it bears little resemblance to one over the reals). The ideal $I(V)$ is an example of a \emph{toric} ideal. Maps and varieties admitting an action of a torus of this form are the subject of \emph{toric geometry}; see Fulton~\cite{Ful93}.
\end{rem}

Two cases of this extension of Ceva's theorem have already been examined in detail. First, the case in which $k=n-1$ has been studied by Landy~\cite{Landy88} and Samet~\cite{Samet21}. In this case an element in $\prod_I\PP^k$ corresponds to a choice of point in each (hyperplane containing a) facet, that is, an $(n-1)$-dimensional face of $\Delta$. The projective coordinates describing the point in $\PP^n$ at which the cevians---the lines from the vertices to the given point in the facet opposite---are interpreted in \cite{Landy88} as a mass distribution on the vertices of the simplex.

Second, and at the other extreme (although these ``extremes" coincide when $n=2$), one can consider the case $k=1$. This is the case treated by Witczy\'{n}ski~\cite{Wit95, Wit96} and Buba-Brzozowa~\cite{BuBr00} and corresponds to choosing a point in each of the coordinate lines of $\PP^n$ (one for each pair $i<j$ in $\{0,\ldots, n\}$). We let $P_{i,j}$ denote this point and let $(x^{i,j}_0:x^{i,j}_1)$ denote its coordinates. Identifying the projective line with these coordinates with $\langle i,j\rangle$ identifies $(x^{i,j}_0:x^{i,j}_1)$ with
\[
(0 : \cdots : x^{i,j}_0 : 0 : \cdots : 0 : x^{i,j}_1 : \cdots : 0),
\]
a point with nonzero entries in the $i$th and $j$th places.

\begin{thm}[cf. Theorem~$1$ of \cite{BuBr00}]
\label{thm:casek1}
	Given points $P_{i,j}$ in each projective coordinate line, as above, the hyperplanes containing $P_{i,j}$ and the face of $\Delta$ opposite the edge contained in $\langle i,j \rangle$ are coincident if and only if the coordinates $x^{i,j}_0$ and $x^{i,j}_1$ of the points $P_{i,j}$ are subject to the equations
    \begin{equation}
	\label{eq:casek1}
	\frac{x^{a,b}_0}{x^{a,b}_1}\times \frac{x^{b,c}_0}{x^{b,c}_1} \times \frac{x^{a,c}_1}{x^{a,c}_0} = 1
	\end{equation}
    for all $0 \leq a < b < c \leq n$.
\end{thm}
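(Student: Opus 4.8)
The plan is to translate the geometric coincidence condition into a system of bilinear equations, one per edge, and then to recognise the resulting consistency requirement as a multiplicative cocycle condition on pairwise ratios. First I would write down the hyperplane attached to each edge. The face of $\Delta$ opposite the edge contained in $\langle i,j\rangle$ is the coordinate subspace $\{x_i = x_j = 0\}$, so any hyperplane containing it has the form $\alpha x_i + \beta x_j = 0$. Requiring this hyperplane to pass through $P_{i,j}$, whose only nonzero coordinates are $x_i = x^{i,j}_0$ and $x_j = x^{i,j}_1$, forces $(\alpha:\beta) = (x^{i,j}_1 : -x^{i,j}_0)$, so the relevant hyperplane is
\[
H_{i,j} \colon \quad x^{i,j}_1\, x_i = x^{i,j}_0\, x_j.
\]
Thus a point $P = (x_0 : \cdots : x_n)$ lies on $H_{i,j}$ precisely when $x^{i,j}_1 x_i = x^{i,j}_0 x_j$, and the hyperplanes are coincident exactly when these $\binom{n+1}{2}$ equations admit a common nonzero solution.

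Next I would reduce this to a statement about ratios. Since every $x^{i,j}_0$ and $x^{i,j}_1$ is nonzero (we work in the torus $U$), any common solution must have all coordinates nonzero: if $x_i = 0$ then $H_{i,j}$ forces $x_j = 0$ for every $j \neq i$, leaving only the forbidden zero vector. Setting $r_{i,j} = x^{i,j}_0 / x^{i,j}_1$, incidence on $H_{i,j}$ becomes $x_i / x_j = r_{i,j}$. Coincidence of all the hyperplanes is therefore equivalent to the existence of a single tuple $(x_0, \ldots, x_n)$ of nonzero scalars whose pairwise ratios are the prescribed $r_{i,j}$. Such a tuple exists if and only if the $r_{i,j}$ satisfy the cocycle relation $r_{a,b}\, r_{b,c} = r_{a,c}$ for all $a < b < c$, and rewriting this relation as $r_{a,b}\, r_{b,c}\, r_{a,c}^{-1} = 1$ reproduces exactly \eqref{eq:casek1}.

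For the forward implication I would simply telescope: given a common point $P$ with nonzero coordinates, one has $r_{a,b}\, r_{b,c}\, r_{a,c}^{-1} = \tfrac{x_a}{x_b}\tfrac{x_b}{x_c}\tfrac{x_c}{x_a} = 1$, which is \eqref{eq:casek1}. For the converse I would construct the point explicitly from the relations involving the index $0$: set $x_0 = 1$ and $x_j = x^{0,j}_1 / x^{0,j}_0$ for $j > 0$, which automatically places $P$ on each $H_{0,j}$. It then remains to check $P \in H_{i,j}$ for $0 < i < j$; this is where \eqref{eq:casek1} is used, with the triple $(a,b,c) = (0,i,j)$ rearranging to give precisely $x_i / x_j = r_{i,j}$. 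Since the $n$ hyperplanes $H_{0,1}, \ldots, H_{0,n}$ already cut out a unique point, the common point is automatically a single point, as claimed.

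The argument is elementary throughout, so there is no single deep obstacle; the steps demanding the most care are the derivation of the equation of $H_{i,j}$ and the verification that the coordinates of a common point cannot vanish. The conceptual crux --- and the mild surprise worth emphasising --- is that the entire family \eqref{eq:casek1} is both forced by, and needs only to be verified through, the star of triples passing through a single fixed vertex, reflecting the fact that a spanning set of ratio relations already determines consistency.
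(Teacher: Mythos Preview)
Your argument is correct and follows essentially the same route as the paper's proof: both normalise $x_0=1$, use the points $P_{0,j}$ to define the candidate $P$, and then verify the remaining edges using only the triples $(0,i,j)$. The one cosmetic difference is that the paper phrases the verification step as an appeal to the already-established two-dimensional Ceva theorem on each coordinate plane $\langle 0,i,j\rangle$, whereas you carry out the equivalent cocycle/telescoping computation directly.
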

\begin{rem}
The ratios in \eqref{eq:casek1} are, of course, the same as those appearing in \cite[Proposition~$2$]{Wit96} and \cite[Theorem~$1$]{BuBr00}. The lack of continuity in the ordering of indices $1$ and $0$ in the final fraction is caused by the cyclic ordering of the indices $\{a,b,c\}$ in Ceva's theorem, but the compatibility of the definition of $x^{a,c}_0$ and $x^{a,c}_1$ with the ordering $a < b < c$.
\end{rem}
\begin{proof}
    Noting that formula \eqref{eq:casek1} is Ceva's theorem applied to the triangles formed by triples of coordinate lines, it suffices to prove that the points $P_{i,j}$ are obtained by projection from a single point in $\PP^n$ if and only if, for each triple $a<b<c$ in $\{0,\ldots,n\}$, each of $P_{a,b}$, $P_{a,c}$, and $P_{b,c}$ is obtained by projection from a point in the projective plane $\langle a,b,c \rangle \subset \PP^n$.

    We seek coordinates $x_0,\ldots,x_n$ such that $P_{i,j}$ is given by $(x_i:x_j)$ for all $i < j$. Begin by setting $x_0=1$. Writing $P_{0,j}$ as $(1:y_j)$ for all $0 < j < n$ fixes a point $P = (1:y_1:\cdots:y_n) \in \PP^n$. Consider the projection of $P$ to $\langle i,j \rangle$. Since the triple of points $(1:y_i)$, $(1:y_j)$, and $(x^{i,j}_0:x^{i,j}_1)$ satisfies Ceva's theorem, we have that $(x^{i,j}_0:x^{i,j}_1)$ is equal to the projection of $(1:y_i:y_j)$ from $(1:0:0)$. That is, $(x^{i,j}_0:x^{i,j}_1) = (y_i:y_j)$. Hence the points $P_{i,j}$ are obtained by projection from $P \in \PP^n$, as required.
\end{proof}

Rather than treating the case $k=n-1$ individually, we simultaneously treat all cases in which $k > 1$ via a matrix completion problem extending that described in Section~\ref{sec:birational-ceva}.

\begin{rem}
    While we obtain a relatively succinct formulation of these extensions of Ceva's theorem, in the case $k > 1$ our equations do not closely resemble the case $k=1$; moreover, our coordinates do not have as geometric an interpretation as that given by Samet in \cite{Samet21}. We can however, following \cite{Landy88}, interpret the homogeneous coordinates on each factor of $\PP^k$ as a mass distribution or weighting of the vertices which determines the specified point in each $k$-dimensional face of $\Delta$.
\end{rem}

To formulate our result, we choose an ordering of the $k+1$ subsets of $\{0,\ldots,n\}$. For each such subset $I$ and element $j \in I$, let $x^I_j$ denote the $j$th coordinate of a given point $P_I \in \langle I \rangle$. We also note that the linear space $\langle I \rangle$ is itself identified with the $I$th factor of $\prod_I\PP^k$. Let $M_{n,k}$ denote the $\begin{pmatrix} n+1 \\ k+1 \end{pmatrix}$ by $n+1$ matrix whose $(I,j)$th entry is equal to $x^I_j$ if $j \in I$ and is unspecified otherwise.

\begin{eg}
\label{eq:ceva-matrix2}
    The matrix $M_{2,1}$ is equal to
    \[
    \begin{pmatrix}
        \star & x^{1,2}_1 & x^{1,2}_2  \\
        x^{0,2}_0 & \star & x^{0,2}_2  \\
        x^{0,1}_0 & x^{0,1}_1 & \star  \\        
    \end{pmatrix}.
    \]
    After suitable relabelling of variables, this is precisely the matrix \eqref{eq:matrix-completion}.
\end{eg}

In terms of this matrix, Question~\ref{qu:ceva-generalization} asks for conditions on the (specified) entries that ensure that the matrix can be completed to one of rank one. Any row of this matrix will then provide homogeneous coordinates specifying the desired point of intersection.

\begin{pro}
    \label{pro:minors}
    If $k > 1$, $V$ is the vanishing locus of the $2\times 2$ minors of $M_{n,k}$ that do not contain any unspecified entries. 
\end{pro}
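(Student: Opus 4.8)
The plan is to reduce membership in $V$ to a rank-one matrix completion and then to show that, on $U$, such a completion is governed precisely by the fully specified $2\times 2$ minors. Recall that a $2\times 2$ minor of $M_{n,k}$ is determined by a choice of two rows $I, I'$ and two columns $j, j'$, and it avoids every unspecified entry exactly when $\{j,j'\} \subseteq I \cap I'$; in that case it equals $x^I_j x^{I'}_{j'} - x^I_{j'} x^{I'}_j$. The easy half is that every point of $V$ annihilates these minors: if the points $P_I$ are the projections $\pi_I(P)$ of a single $P = (x_0:\cdots:x_n)$, then for each $I$ there is a nonzero scalar $\lambda_I$ with $x^I_j = \lambda_I x_j$ for all $j \in I$, so every fully specified minor vanishes identically. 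Equivalently, completing the entry in position $(I,j)$ with $j \notin I$ by $\lambda_I x_j$ exhibits $M_{n,k}$ as a rank-one matrix.

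For the reverse inclusion I would reconstruct $P$ directly from the specified entries. Assume the point lies in $U$, so every $x^I_j$ is nonzero, and that all fully specified $2\times 2$ minors vanish. For any pair $a \neq b$ in $\{0,\ldots,n\}$ choose a $(k+1)$-subset $I$ containing both (possible since $k < n$) and set $r_{a,b} := x^I_a/x^I_b$. This ratio is \emph{independent of the choice of $I$}: if $I'$ also contains $a$ and $b$, then $\{a,b\} \subseteq I \cap I'$, and the vanishing of the minor on rows $I, I'$ and columns $a,b$ gives $x^I_a x^{I'}_b = x^I_b x^{I'}_a$, i.e. $x^I_a/x^I_b = x^{I'}_a/x^{I'}_b$. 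Thus $r_{a,b}$ is a well-defined nonzero scalar with $r_{b,a} = r_{a,b}^{-1}$.

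The crux is the cocycle relation $r_{a,b}\,r_{b,c} = r_{a,c}$ for every triple $a,b,c$, and here the hypothesis $k>1$ enters decisively. Since $k+1 \geq 3$ there is a single subset $I$ containing all three indices, and within that one row $r_{a,b}\,r_{b,c} = (x^I_a/x^I_b)(x^I_b/x^I_c) = x^I_a/x^I_c = r_{a,c}$ is immediate. Given this, I fix the scale by setting $x_0 = 1$ and $x_j = r_{j,0}$ for $j \neq 0$, obtaining a point $P = (x_0:\cdots:x_n) \in \PP^n$ (well defined, as all ratios are nonzero). The cocycle relation yields $x_a/x_b = r_{a,0}/r_{b,0} = r_{a,b}$ for all $a,b$. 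Finally, for each $I$ and all $j, j' \in I$ one has $x^I_j/x^I_{j'} = r_{j,j'} = x_j/x_{j'}$, so the specified part of row $I$ is proportional to $(x_j)_{j \in I}$; hence $P_I = \pi_I(P)$ and the tuple lies in $V$.

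I expect the only genuine obstacle to be the cocycle step, which is exactly where the argument breaks down for $k=1$: when $k+1=2$ no single subset sees three indices at once, so the pairwise ratios need not be consistent, and the additional Ceva relations of Theorem~\ref{thm:casek1} are genuinely required. Everything else is bookkeeping: checking that membership in $U$ guarantees nonvanishing denominators (so that $P$ is well defined and each $\pi_I$ is defined at $P$), and confirming that I only ever invoke columns lying in $I \cap I'$, so that the minors used really are those ``not containing any unspecified entries.''
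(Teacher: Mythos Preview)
Your proof is correct, and the pivotal observation---that $k>1$ lets a single subset $I$ contain any three indices $a,b,c$---is the same one the paper uses. The route, however, is genuinely different. The paper does not build the ratios $r_{a,b}$ or invoke a cocycle directly; instead it defines points $P_{i,j}$ on the coordinate lines (the common projections, well defined by the minor conditions), uses the ``three indices in one $I$'' fact to show that every triple $P_{a,b},P_{a,c},P_{b,c}$ arises from a single point in $\langle a,b,c\rangle$, and then \emph{invokes Theorem~\ref{thm:casek1}} to produce $P$. In effect the paper reduces the $k>1$ case to the already-proven $k=1$ case, whereas you bypass Theorem~\ref{thm:casek1} entirely and reconstruct $P$ from scratch via the multiplicative cocycle. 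Your argument is more self-contained and makes the role of $k>1$ completely explicit; the paper's argument, on the other hand, emphasizes the hierarchy among the various $k$ and reuses prior work. Either is a complete proof.
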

\begin{proof}
    As in the proof of Theorem~\ref{thm:casek1}, we first interpret the statement in terms of projections of $\PP^n$. For the points $P_I$ to be obtained by projection from a single point $P$, it is clearly necessary that the projections of $P_I$ and $P_J$ to the line $\langle i,j\rangle$ coincide whenever $\{i,j\} \subset I\cap J$. That is, the projections of points in $k$-dimensional subspaces to a common line must coincide. This condition is precisely the vanishing of the $2\times 2$ minor of $M_{n,k}$ 
    \[
    x^I_ix^J_j - x^I_jx^J_i = 0.
    \]
    Moreover, the vanishing of any such $2\times 2$ minor describes the coincidence of the projections of $P_I$ and $P_J$ to a shared line $\langle i,j\rangle$. Following the notation used in the proof of Theorem~\ref{thm:casek1}, denote the point obtained by projection to the line $\langle i,j\rangle$ by $P_{i,j}$. Given that $k\geq 2$, it must also be the case that for any triple $a<b<c$, the points $P_{a,b}$, $P_{a,c}$, and $P_{b,c}$ are obtained by projection from a point in $\langle a,b,c\rangle$. Indeed, take any $I$ such that $\{a,b,c\} \subseteq I$ and project $P_I$ to $\langle a,b,c\rangle$. This point must then project to each of $P_{a,b}$, $P_{a,c}$, and $P_{b,c}$. Hence the set of points $\{P_{i,j} : 0 \leq i < j \leq n \}$ satisfies the conditions of Theorem~\ref{thm:casek1} and these are hence obtained by projection from a point $P \in \PP^n$. Letting $P'_I$ denote the projection of $P$ to a linear space $\langle I \rangle$ of dimension $k$, we observe that $P'_I=P_I$ as these points have the same projection to every coordinate line.    
\end{proof}

We note, to conclude, that while Theorem~\ref{thm:casek1} and Proposition~\ref{pro:minors} describe ideals whose vanishing locus coincides with $V$, as asked by Question~\ref{qu:ceva-alggeom}, one could consider various refinements of this question. For example, we have not attempted to show that the ideals described are radical, nor have we attempted to describe ideals in which any coordinate is permitted to vanish, as we have in the case $n=2$, $k=1$. Certainly aspects of the geometric picture described in Section~\ref{sec:ceva-again} translate to higher dimensions, in which the natural object generalizing the blowup of the projective plane in three points is the blowup of $\PP^n$ in the union of its $(n-k)$-dimensional coordinate subspaces.

Finally, we offer a further extension of the questions considered above, suggested by the matrix completion approach used in Proposition~\ref{pro:minors}. Considering the matrix $M_{n,k}$ as defined above, we can ask for necessary and sufficient conditions for it to be possible to complete this matrix to one of rank $(r+1)$, for a specified value of $r \geq 0$. Reformulating this question geometrically leads us to the following:
\begin{qu}
Fixing points in each of the $k$-dimensional faces of an $n$-dimensional simplex $\Delta$, when does there exist an $r$-dimensional linear space that intersects all the linear spaces obtained as the span of each such point and its opposite face?
\end{qu}

\section*{Acknowledgments.}
The author wishes to thank the anonymous referees for their valuable comments and feedback on this article.

\bibliographystyle{plain}
\bibliography{biblio}

\end{document}